\theoremstyle{plain}
\newtheorem{theorem}{Theorem}[section]
\newtheorem*{remark*}{Remark}
\begin{document}
\title{\textbf{Semialgebraic and Continuous Solution }\\
 \textbf{ of Linear Equation with }\\
 \textbf{ Semialgebraic Coefficients}}
\author{Marcello Malagutti}
\date{}
\maketitle

\begin{abstract}
Starting from the results of Charles Fefferman and Janos Kollár in \emph{Continuous Solutions of Linear Equations} \cite{FK}, we adopt a new approach based on Fefferman's techniques of Glaeser refinement to show a more general result than the one proved by Kollár by using techniques from algebraic geometry. Considering a system of linear equations with semialgebraic (not only polynomial as in \cite{FK}) coefficients on $\mathbb{R}^{n}$, we get a necessary and sufficient condition for the existence of a continuous and semialgebraic solution on $\mathbb{R}^{n}$. This is different from what Fefferman and Luli obtained in \emph{Semialgebraic Sections Over the Plane} \cite{FL-2} since they stated their result for solutions of regularity $C^m$ on the plane $\mathbb{R}^2$. More in depth, we prove that a continuous and semialgebraic solution on $\mathbb{R}^{n}$ exists if and only if there is a continuous solution i.e., if the Glaeser-stable bundle associated to the system has no empty fiber.
\end{abstract}

\maketitle



\section{Introduction}
\label{intro}
This work deal with the open problem of obtaining by analytical techniques necessary and sufficient 
conditions for the existence of a  $C^m$ and semialgebraic solution of a system of linear equation 
with semialgebraic coefficients. In case $m=0$ of a system with polynomial coefficients, the problem 
was solved by Fefferman-Kollár \cite{FK} and Kollár \cite{K} using algebraic techniques for systems 
with polynomial coefficients. In this work, 
by a new approach based on Fefferman\'s analytic techniques 
of Glaeser\'s refinements, we solve the problem for the case of a $C^0$ and semialgebraic solution on a 
general $n$-dimension space $\mathbb{R}^{n}$, extending Fefferman- Kollár\'s result to the case of a system 
with semialgebraic (not only polynomial as in \cite{FK}) coefficients. 

\noindent Let us go throught a deeper explanation of our work and the context in which it is developed. 

C. Fefferman proved in \cite{FK}, by means of analysis techniques,
a necessary and sufficient condition for the existence of a continuous
solution $(\phi_{1},\ldots,\phi_{s})$ of the system
\begin{equation}
\phi=\sum\limits _{i=1}^{s}\phi_{i}f_{i}\label{eq:System_1}
\end{equation}
given the \emph{continuous} functions $\phi$ and $f_{i}$. More precisely,
by applying the theory of the Glaeser refinements for bundles, he
showed that system (\ref{eq:System_1}) has a continuous solution
if and only if the affine Glaeser-stable bundle associated with system
(\ref{eq:System_1}) has no empty fiber.

Moreover J.$\,$Kollár, in the same (joint) paper \cite{FK}, starting
from the above result and making use of algebraic geometry techniques
as blowing up at singular points, proved that fixed the \emph{polynomials}
$f_{1},\ldots,f_{s}$ and assuming system (\ref{eq:System_1}) has
a solution, then:

\noindent 1) if $\phi$ is semialgebraic then there is a solution
$(\psi_{1},\ldots,\psi_{s})$ of $\phi=\underset{i}{\sum}\psi_{i}f_{i}$
such that the $\psi_{i}$ are also semialgebraic;

\noindent 2) let $U\subset\mathbb{R}^{n}\backslash Z$ (where $Z:=(f_{1}=\ldots=f_{r}=0)$)
be an open set such that $\phi$ is $C^{m}$ on $U$ for some $1\leq m\leq\infty$
or $m=\omega$. Then there is a solution $\psi=(\psi_{1},\ldots,\psi_{s})$
of $\phi=\sum\limits _{i=1}^{s}\psi_{i}f_{i}$ such that the $\psi_{i}$
are also $C^{m}$ on $U$.\medskip{}

Next, in \cite{FL-1} C. Fefferman and G.K. Luli exhibited generators
of the module $\mathcal{M}$ (over the ring of polynomials on $\mathbb{R}^{n}$)
of the vectors $f:=(f_{1},\ldots,f_{s})$ of \emph{polynomials} $f_{1},\ldots,f_{s}$
such that
\begin{equation}
\sum_{j=1}^{M}A_{ij}F_{j}=f_{i}\,\,(i=1,\ldots,N),\label{eq:ThePb}
\end{equation}
(for unknown functions $F_{1},\ldots,F_{N}\in C^{m}(\mathbb{R}^{n})$
, $m$ fixed) admits a $C^{m}$ solution.

Finally, in \cite{FL-2} C. Fefferman and G.K. Luli showed that if
$\mathcal{H}$ is a semialgebraic bundle with respect to the space
of $\mathbb{R}^{D}$-valued functions on the plane $\mathbb{R}^{2}$ with continuous
derivatives up to order $m$ (that space is called $C_{loc}^{m}(\mathbb{R}^{2};\mathbb{R}^{D})$)
and it has a $C_{loc}^{m}(\mathbb{R}^{2};\mathbb{R}^{D})$ section,
then $\mathcal{H}$ has a semialgebraic and $C_{loc}^{m}(\mathbb{R}^{2};\mathbb{R}^{D})$
section. Actually, the authors do not give an explicit method to compute
that semialgebraic $C_{loc}^{m}(\mathbb{R}^{2};\mathbb{R}^{D})$ section:
the $C_{loc}^{m}(\mathbb{R}^{2};\mathbb{R}^{D})$ semialgebraic section
is defined as the one satisfying equations (97), (98), and (99) at
p.44 of \cite{FL-2}.

In the case $m=0$ the problems of \vspace{2mm}

\begin{minipage}[t]{0.9\columnwidth}%
$-$ determining necessary and sufficient conditions for the existence
of a continuous solution of (\ref{eq:ThePb}) where $A_{ij}$ and
$f_{j}$ are given functions,

$-$ exhibiting generators of the module $\mathcal{M}$ with $A_{ij}$
given \emph{polynomials},

$-$ determining necessary and sufficient conditions for the existence
of a continuous and semialgebraic solution of (\ref{eq:ThePb}) where
$A_{ij}$ and $f_{j}$ are given \emph{polynomials} and (\ref{eq:ThePb})
admits a continuous solution,%
\end{minipage}

\vspace{3mm}

\noindent were posed by Brenner \cite{B},
and Epstein-Hochster \cite{EH}, and solved by Fefferman-Kollár \cite{FK}
and Kollár \cite{K}.

In this paper by a new approach, based on Fefferman's techniques, we generalize and solve 
the third of the above problems showed by Kollár through algebraic
techniques for $m=0$. More precisely, we
prove that if a semialgebraic bundle associated to a system with coefficients
and right-hand side that are semialgebraic (but not necessarily continuous) 
on $\mathbb{R}^{n}$ has a continuous section then it has also a continuous and semialgebraic
section. We show it without employing the algebraical blow-up theory
but only by using the analytical Fefferman-Glaeser theory with the
aim of determining an explicit method to construct a continuous and
semialgebraic section.

Let us provide a more detailed description of the problem we deal
with. We consider a \emph{semialgebraic
compact metric space} $Q\subseteq\mathbb{R}^{n}$ and a \emph{system
of linear equations 
\begin{equation}
A\left(x\right)\phi\left(x\right)=\gamma\left(x\right),\quad x=(x_{1},\ldots,x_{n})\in Q\label{eq:TargetProblem}
\end{equation}
where 
\[
Q\ni x\longmapsto A(x)=(a_{ij}(x))\in M_{r,s}(\mathbb{R})
\]
is\emph{ semialgebraic,} with $M_{r,s}(\mathbb{R})$ denoting the
set of real $r\times s$ matrices and 
\[
Q\ni x\longmapsto\gamma(x)\in\mathbb{R}^{r},\gamma(x)=\left[\begin{array}{c}
\gamma_{1}(x)\\
\vdots\\
\gamma_{r}(x)
\end{array}\right]\in\mathbb{R}^{r}
\]
being themselves\emph{ semialgebraic functions} on $Q\subseteq\mathbb{R}^{n}$.}

\ \\ \
\begin{center}
{\fboxsep 4pt\fbox{\begin{minipage}[c]{0.9\textwidth}%
Our aim is to find a \emph{necessary and sufficient condition} for
the existence of a solution $Q\ni x\longmapsto\phi\left(x\right)=\left[\begin{array}{c}
\phi_{1}(x)\\
\vdots\\
\phi_{s}(x)
\end{array}\right]\in\mathbb{R}^{s}$ of system (\ref{eq:TargetProblem}), with the $\phi_{i}:Q\rightarrow\mathbb{R}$
\emph{continuous }and \emph{semialgebraic}.%
\end{minipage}}}
\end{center}
\ \\ \ \\ \ 

We notice that the semialgebraicity of $Q$ is a necessary condition
for the existence of a semialgebraic solution of system (\ref{eq:TargetProblem})
by the definition of semialgebraic function (i.e. a function with
semialgebraic graph) and by the Tarski-Seidenberg
theorem \footnote{\textbf{Tarski-Seidenberg Theorem} $Let$ $A$ \emph{a semialgebraic
subset of} $\mathbb{R}^{n+1}$ \emph{and} $\pi:\mathbb{R}^{n+1}\rightarrow\mathbb{R}^{n}$,
\emph{the projection on the first} $n$ \emph{coordinates. Then} $\pi(A)$
\emph{is a semialgebraic subset of} $R^{n}$.

\vspace{2pt}

\textbf{Corollary}\emph{ If} $A$ \emph{is a semialgebraic subset
of} $\mathbb{R}^{n+k}$, \emph{its image by the projection on the
space of the first $n$ coordinates is a semialgebraic subset of}
$\mathbb{R}^{n}$.}. 

\vspace{0.5cm}

\noindent The plan of the paper is the following. In Section \ref{sec:The-setting}
we fix some notations and give some definitions that will be used
in Section \ref{sec:Existence}.

\noindent In Section \ref{sec:Existence} we prove that
if a semialgebraic bundle associated to a system of semialgebraic
(but not necessarily continuous) function on a semialgebraic compact
set $Q$ has a continuous section then it has also a semialgebraic
and continuous one. The main idea is to prove the result by an induction
argument on the dimension $d$ of $Q$. (We recall that the dimension
of a semialgebraic set $E\subset\mathbb{R}^{n}$ is the maximum of
the dimensions of all the embedded, not necessarily compact, submanifolds
of $\mathbb{R}^{n}$ that are contained in $E$.) In fact, for the
case $d=1$ we use the fact that a semialgebraic function on a subset
of $\mathbb{R}$ has finitely many isolated discontinuity points (the
set on which a semialgebraic function is not continuous is a semialgebraic
subset of its domain of strictly lower dimension and a semialgebraic
set of dimension $0$ is finite i.e. it is made by finitely many isolated points).
Hence, we construct a local semialgebraic and continuous section of
the bundle on a neighbourhood of each point of $Q$ and we glue the
semialgebraic and continuous sections by a semialgebraic and continuous
partition of the unity. Next, in the case $d\geq2$, by induction
hypothesis there is a continuous and semialgebraic section on an appropriate
compact subset of $Q$ of dimension $\leq d-1$ (which will be defined
in the proof of Theorem \ref{thm:Existence}) and we extend it thanks
to a semialgebraic version of Tietze-Uryshon Theorem. Finally, we
need to compute the projection of the extension on the fibers of $\mathcal{H}^{{\rm Gl}}$
(i.e. the Glaeser-stable bundle associated to the system (\ref{eq:TargetProblem}))
to obtain a continuous and semialgebraic section of $\mathcal{H}^{{\rm Gl}}$
i.e. a semialgebraic and continuous solution of (\ref{eq:TargetProblem}).

\noindent The result of Section \ref{sec:Existence} is obtained without
the use of algebraic geometrical tools, but only by the
analysis techniques such as the Glaeser refinement and the theory
of bundle sections developed by Fefferman. This paper gives an explicit
method for the construction of a semialgebraic continuous solution
of system (\ref{eq:TargetProblem}) by finitely many induction steps.

\section{The setting \label{sec:The-setting}}

Let us start by setting some notations and definitions that will be
used to pursue our goal. We shall endow every $\mathbb{R}^{s}$ used
here with euclidean norm. 
\ \\
\begin{quote}
\textbf{Notation:} Let $V\subseteq\mathbb{R}^{s}$ be an affine space
in $\mathbb{R}^{s}$ and $w\in\mathbb{R}^{s}$. We denote the \emph{projection
of $w$ on $V$} (i.e. the point $v\in V$ that makes the euclidean
norm of $v-w$ as small as possible) by $\Pi_{V}w$.
\end{quote}
\ \\
Let us consider a \emph{singular affine bundle }(or \emph{bundle}
for short) (see \cite{FK}), meaning a family $\mathcal{H}=(H_{x})_{x\in Q}$
of affine subspaces $H_{x}\subseteq\mathbb{R}^{s}$, parametrized
by the points $x\in Q$. The affine subspaces 
\[
H_{x}=\left\{ \lambda\text{\ensuremath{\in}}\mathbb{R}^{s}:A\left(x\right)\lambda=\gamma\left(x\right)\right\} ,\quad x\in Q
\]
are the \emph{fibers} of the bundle $\mathcal{H}$. (Here, we allow
the empty set $\emptyset$ and the whole space $\mathbb{R}^{s}$ as
affine subspaces of $\mathbb{R}^{s}$.)

Now we call $\mathcal{H}^{(k)}$ the $k$-th \emph{Glaeser refinement}
of $\mathcal{H}$ i.e. $\mathcal{H}^{(0)}:=\mathcal{H}$ and for all
$k\geq1$ the fibers of $\mathcal{H}^{(k)}$ are
\[
H_{x}^{(k)}:=\{\lambda\in H_{x}^{(k-1)};\,\,{\rm dist}(\lambda,H_{y}^{(k-1)})\longrightarrow0\text{ as }y\longrightarrow x\,\,\,(y\in Q)\},
\]
 for all $x\in Q$ (see Chapter 2 of \cite{FK}). We notice that $\mathcal{H}^{(k)}$
is a subbundle of $\mathcal{H}^{(k-1)}$ for all $k\geq1$. By Lemma
5 of \cite{FK} the procedure of refinement leads to a Glaeser-stable
refinement\emph{ }of $\mathcal{H}$ i.e. there is a $r\in\mathbb{N}$
such that $\mathcal{H}^{(2r+1)}=\mathcal{H}^{(2r+2)}=\cdots$. We
denote $\mathcal{H}^{(2r+1)}$ by $\mathcal{H}^{\mathrm{Gl}}$ and
we will call it the \emph{Glaeser-stable refinement} of $\mathcal{H}$
(its fibers will respectively be denoted by $H_{x}^{\mathrm{Gl}}$
for all $x\in Q$). Notice that the projection on the fibers of $\mathcal{H}^{{\rm Gl}}$
is not linear as the fibers are affine spaces and not vector spaces. 

Given a continuous solution $f$ of system (\ref{eq:TargetProblemSystem-1})
we define
\[
Q\ni y\longmapsto\omega(y):=\varPi_{\left(H_{y}^{\mathrm{Gl}}\right)^{\bot}}f(y),
\]
and we notice that $\omega$ does not depend on the choice of the
continuous solution $f$. More precisely, for all $y\in Q$ the value
$\omega(y)$ can be computed by projecting $0$ on $H_{y}^{\mathrm{Gl}}$.
Moreover, we define 
\[
Q\ni y\longmapsto\tilde{\varPi}_{1}(y)v:=\varPi_{\left(H_{y}^{{\rm Gl}}\right)^{\bot}}v,
\]
for all $v\in\mathbb{R}^{s}$. We say that $\tilde{\varPi}_{1}$ is
continuous if $y\longmapsto\tilde{\varPi}_{1}(y)e_{j}$ is continuous
for all $j=1,...,s$ with $(e_{1},...,e_{s})$ the canonical basis
of $\mathbb{R}^{s}$.

\section{Existence of a continuous semialgebraic solution
\label{sec:Existence}}

In this section we prove that system (\ref{eq:TargetProblem})
on a semialgebraic compact space $Q\subseteq\mathbb{R}^{n}$ has a
semialgebraic and continuous solution if and only if it has a continuous
one. We do it by induction on the dimension of $Q$ on which the problem
is defined.
\begin{theorem}
\label{thm:Existence} Consider a semialgebraic compact metric space\textup{
$Q\subseteq\mathbb{R}^{n}$} and a system of linear equations
\begin{equation}
A\left(x\right)\phi\left(x\right)=\gamma\left(x\right),\quad x\in Q,\label{eq:TargetProblemSystem-1}
\end{equation}
where the entries of
\[
A(x)=(a_{ij}(x_{1},\ldots,x_{n}))\in M_{r,s}(\mathbb{R})\quad\text{and}\quad\gamma(x)=\left(\gamma_{i}(x)\right)\in\mathbb{R}^{r}
\]
are themselves semialgebraic functions on $\mathbb{R}^{n}$.

\noindent Then system \emph{(\ref{eq:TargetProblemSystem-1})} has
a continuous semialgebraic solution $\phi:Q\rightarrow\mathbb{R}^{s}$
if and only if $\mathcal{H}^{\mathrm{Gl}}$ has no empty fiber.
\end{theorem}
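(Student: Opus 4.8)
The plan is to prove the nontrivial implication: if $\mathcal{H}^{\mathrm{Gl}}$ has no empty fiber, then it admits a continuous semialgebraic section (the converse being immediate, since a continuous semialgebraic section is in particular a continuous section, and by Fefferman's theory in \cite{FK} a continuous section of $\mathcal{H}$ exists precisely when $\mathcal{H}^{\mathrm{Gl}}$ has no empty fiber). I would argue by induction on $d = \dim Q$. Throughout, I would use that all the objects in play are semialgebraic: the fibers $H_x$ are cut out by semialgebraic data, the Glaeser refinement of a semialgebraic bundle is semialgebraic (each refinement step is defined by a first-order condition on the coefficients, hence semialgebraic by Tarski--Seidenberg, and Lemma 5 of \cite{FK} bounds the number of steps), so $\mathcal{H}^{\mathrm{Gl}}$ is a semialgebraic bundle; likewise $\omega$ and $\tilde{\varPi}_1$ are semialgebraic.

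\emph{Base case $d \le 1$.} Here I would use that a semialgebraic function on a subset of $\mathbb{R}$ (and more generally the locus of discontinuity of a semialgebraic map on a $1$-dimensional semialgebraic set) is discontinuous only on a finite set of isolated points. Near each point $x_0 \in Q$ I would construct a local continuous semialgebraic section of $\mathcal{H}^{\mathrm{Gl}}$: away from the finitely many bad points the bundle is locally trivial enough that one can take, e.g., the continuous selection $y \mapsto \varPi_{H_y^{\mathrm{Gl}}}(v_0)$ for a suitable fixed $v_0$, which is semialgebraic; at a bad point one uses a continuous section that exists by Glaeser-stability and adjusts it to be semialgebraic on a small interval. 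Then I would glue these local sections using a semialgebraic continuous partition of unity subordinate to a finite cover of the compact set $Q$, noting that the fibers are affine spaces so convex combinations of points in $H_y^{\mathrm{Gl}}$ stay in $H_y^{\mathrm{Gl}}$, whence the glued map is a genuine section.

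\emph{Inductive step $d \ge 2$.} Assuming the result for semialgebraic compact spaces of dimension $\le d-1$, I would isolate a semialgebraic compact subset $Q' \subseteq Q$ with $\dim Q' \le d-1$ containing all the ``trouble'' — the locus where $\tilde\varPi_1$ or the relevant projections fail to be continuous, together with the lower-dimensional strata of $Q$ — so that on $Q \setminus Q'$ the bundle behaves well. By the induction hypothesis there is a continuous semialgebraic section $\sigma'$ of $\mathcal{H}^{\mathrm{Gl}}|_{Q'}$. I would then extend $\sigma'$ to a continuous semialgebraic map $\tilde\sigma : Q \to \mathbb{R}^s$ by a semialgebraic Tietze--Urysohn extension theorem (available in the o-minimal/semialgebraic category). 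Finally, $\tilde\sigma$ need not land in the fibers, so I would set $\sigma(y) := \varPi_{H_y^{\mathrm{Gl}}}\tilde\sigma(y)$; this is semialgebraic (composition of semialgebraic maps), it agrees with $\sigma'$ on $Q'$ since $\sigma'$ already lies in the fibers, and it is continuous on all of $Q$ because the nearest-point projection onto $H_y^{\mathrm{Gl}}$ varies continuously away from $Q'$ by the choice of $Q'$ and is pinned down correctly on $Q'$ — here one invokes the Glaeser-stability of $\mathcal{H}^{\mathrm{Gl}}$, which is exactly what guarantees that the projection of a nearby continuous map onto the fibers stays continuous across $Q'$.

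\emph{Main obstacle.} The delicate point is the continuity of $\sigma = \varPi_{H^{\mathrm{Gl}}_\cdot}\tilde\sigma$ across the lower-dimensional set $Q'$: the nearest-point projection onto an affine subspace is discontinuous exactly where the dimension of the fiber jumps, so one must choose $Q'$ to absorb precisely this jump locus (which is semialgebraic of dimension $\le d-1$, so it is legitimate to throw it into $Q'$) and then use Glaeser-stability to show that the values forced on $Q'$ by $\sigma'$ match the limiting values of $\sigma$ from $Q\setminus Q'$. Verifying that $\dim Q' \le d-1$ — i.e. that the discontinuity locus of the relevant semialgebraic data is a proper semialgebraic subset — and that the glued object genuinely solves \eqref{eq:TargetProblemSystem-1}, is the technical heart of the argument; everything else is bookkeeping with Tarski--Seidenberg and semialgebraic partitions of unity.
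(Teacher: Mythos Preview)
Your proposal is correct and follows essentially the same approach as the paper: induction on $\dim Q$, semialgebraicity of $\mathcal{H}^{\mathrm{Gl}}$, $\omega$, and $\tilde\varPi_1$ via quantifier elimination, a base case handled by local sections $y\mapsto \varPi_{H_y^{\mathrm{Gl}}}v_x$ glued with a semialgebraic partition of unity, and an inductive step in which one applies the hypothesis on the closure of the discontinuity locus of $\omega$ and $\tilde\varPi_1$, extends by the semialgebraic Tietze--Urysohn theorem, and projects back onto the fibers, with Glaeser stability supplying continuity across the bad set. The only cosmetic difference is that the paper treats every point uniformly in the base case (the map $y\mapsto \varPi_{H_y^{\mathrm{Gl}}}v_x$ is automatically continuous at $x$ itself by Glaeser stability, so no separate ``adjustment'' at bad points is needed), and the paper does not add the lower-dimensional strata of $Q$ to $Q'$, but neither point affects the argument.
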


\begin{proof} 

We start by proving the forward implication which is trivial. In fact,
if system (\ref{eq:TargetProblemSystem-1}) has a continuous solution
then $\mathcal{H}^{\mathrm{Gl}}$ has no empty fiber (see \cite{FK}).

Now, we prove the reverse implication. To do it we proceed by induction
on the dimension $d\in\{1,...,n\}$ of $Q$. (We notice that if $d=0$
then $Q$ is a finite set and, hence, any selection of $\mathcal{H}^{\mathrm{Gl}}$
is a semialgebraic and continuous section of $\mathcal{H}^{\mathrm{Gl}}$.
A selection of $\mathcal{H}^{\mathrm{Gl}}$ exists since $\mathcal{H}^{\mathrm{Gl}}$
has no empty fiber.) Actually, before starting the proof by induction,
we need to show that $\omega$ is semialgebraic. To do this we need
to verify that the set 
\[
\mathcal{H}_{Q}^{\mathrm{Gl}}:=\{(x,v)\in\mathbb{R}_{x}^{n}\times\mathbb{R}_{v}^{s};\,\,x\in Q,\,v\in H_{x}^{\mathrm{Gl}}\}\:\text{is semialgebraic.}
\]
Hence, we prove by induction on $k\geq0$ that 
\[
\mathcal{H}_{Q}^{(k)}:=\{(x,v)\in\mathbb{R}_{x}^{n}\times\mathbb{R}_{v}^{s};\,\,x\in Q,\,v\in H_{x}^{(k)}\}
\]
 is semialgebraic for all $k\geq0$. In fact,
\[
\mathcal{H}_{Q}^{(0)}=\{(x,v)\in Q\times\mathbb{R}^{s};\,\,A(x)v=\gamma(x)\}
\]
is semialgebraic and after supposing that $\mathcal{H}_{Q}^{(k-1)}$
is semialgebraic ($k\geq1)$, $\mathcal{H}_{Q}^{(k)}$ can be rewritten
as
\begin{align*}
\{(x,v) & \in\mathcal{H}_{Q}^{(k-1)};\,\,\forall\varepsilon>0,\,\exists\delta>0,\\
 & \forall(y,v^{'})\in\left(B(x,\delta)\times\mathbb{R}^{s}\right)\cap\mathcal{H}_{Q}^{(k-1)}:\,\left\Vert v-v^{'}\right\Vert <\varepsilon\},
\end{align*}
that is semialgebraic by elimination of quantifiers. Now, $\omega$
has graph given by 
\begin{equation}
\{(x,v)\in\mathcal{H}_{Q}^{\mathrm{Gl}};\,\,\nexists(x^{'},v^{'})\in\mathcal{H}_{Q}^{\mathrm{Gl}},\,x^{'}=x,\,\left\Vert v\right\Vert >\left\Vert v^{'}\right\Vert \},
\end{equation}
 with $\left\Vert \cdot\right\Vert $ the euclidean norm on $\mathbb{R}^{s}$
and, hence, it is semialgebraic by elimination of quantifiers.

In a similar way we have that if $(e_{1},...,e_{s})$ is the canonical
basis of $\mathbb{R}^{s}$ then $y\longmapsto\tilde{\varPi}_{1}(y)e_{j}$
is semialgebraic for all $j$ since its graph can be written as 
\begin{align*}
\{(y,v) & \in Q\times\mathbb{R}^{s};\,\,\exists(x^{'},v^{'})\in\left(\mathcal{H}^{\mathrm{Gl}}-\omega\right)_{Q},\,x^{'}=y,\,\left\langle v^{'},v\right\rangle =0,\,v+v^{'}=e_{j}\},
\end{align*}
which is semialgebraic by elimination of quantifiers because
\[
\left(\mathcal{H}^{\mathrm{Gl}}-\omega\right)_{Q}:=\{(x,v)\in Q\times\mathbb{R}^{s};\,\,\exists(x^{'},v^{'})\in\mathcal{H}_{Q}^{\mathrm{Gl}}-\omega,\,x^{'}=x,\,v=v^{'}-\omega(x)\}
\]
is semialgebraic again by elimination of quantifiers.

Now, we are ready to begin the proof by induction. 

We start with the case $d=1$. For every given $x\in Q$ there exists
$v_{x}\in H_{x}^{\mathrm{Gl}}\neq\emptyset$ and a ball $B(x,r_{v_{x}})\subseteq\mathbb{R}^{n}$
such that
\begin{align*}
Q\cap B(x,r_{v_{x}})\ni y\longmapsto\tilde{\gamma}_{v_{x}}(y):= & \varPi_{H_{y}^{\mathrm{Gl}}}v_{x}\\
= & \omega(y)+v_{x}-\tilde{\varPi}_{1}(y)v_{x}
\end{align*}
 is semialgebraic since $\omega$ and $\tilde{\varPi}_{1}$ are semialgebraic.
We show that $\tilde{\gamma}_{v_{x}}$ is continuous for $r_{v_{x}}$
small enough. In fact, if we suppose by
contradiction that there is no $r_{v_{x}}$ such that $\tilde{\gamma}_{v_{x}}$
is continuous then for all $n\in\mathbb{N}$
there is $y_{n}\in B(x,\frac{r_{v_{x}}}{n})$ such that $\tilde{\gamma}_{v_{x}}$
is discontinuous at $y_{n}$. Hence, there are two possibilities:

1. $\forall n\in\mathbb{N},\,y_{n}\neq x$. A semialgebraic function
is real analytic on the complementary of a semiagebraic set of dimension
strictly less than the one of its domain. (In fact, the domain of
a semialgebraic function is semialgebraic by the Tarski-Seidenberg
theorem.) Thus, since $\gamma_{v_{x}}$ is semialgebraic on $Q$
the discontinuity points of $\gamma_{v_{x}}$ are finitely many. Hence,
we come to a contradiction;

2. $\exists\overline{n}\in\mathbb{N}\text{ such that }y_{\overline{n}}=x$.
Now, since $v_{x}\in H_{x}^{\mathrm{Gl}}$ on the one hand 
\[
\mathrm{dist}(v_{x};H_{y}^{\mathrm{Gl}})\underset{{\scriptstyle y\rightarrow x}}{\longrightarrow}0,
\]
and, on the other, 
\[
\left\Vert \Pi_{H_{y}^{{\rm Gl}}}v_{x}-v_{x}\right\Vert ={\rm dist}(v_{x},H_{y}^{{\rm Gl}}).
\]
Therefore 
\[
\Pi_{H_{y}^{{\rm Gl}}}v_{x}\underset{{\scriptstyle y\rightarrow x}}{\longrightarrow}v_{x}\underset{\overset{\uparrow}{{\scriptstyle v_{x}\in H_{x}^{\mathrm{Gl}}}}}{=}\Pi_{H_{x}^{{\rm Gl}}}v_{x}.
\]
This is impossible since $\gamma_{v_{x}}$ would be continuous at
$x$, contrary to the assumption. Thus $\tilde{\gamma}_{v_{x}}$
is continuous upon possibly reducing the ball radius $r_{v_{x}}$.

Now we glue these local solutions thanks to a semialgebraic continuous
partition of the unity. In fact, we notice that the set of balls $\{B(x,\overline{r}_{v_{x}})\}_{x\in Q}$,
where $v_{x}$ is chosen in $H_{x}^{\mathrm{Gl}}$, is an open cover
of the compact space $Q$. Then there is $N$ such that $\{B(x_{i},\overline{r}_{v_{x_{i}}})\}_{i=1,\ldots,N}$
is an open cover of $Q$. Consider 
\begin{equation}
\mu_{(x,r)}(y):=\begin{cases}
\sqrt{r^{2}-\left\Vert y-x\right\Vert ^{2}} & \text{for }y\in B(x,r),\\
0 & \text{for }y\notin B(x,r).
\end{cases}\label{eq:PartitionUnity}
\end{equation}
Notice that $\mu_{(x,r)}(y)$ is semialgebraic and continuous on $Q$,
$\forall x\in Q$, $\forall r\in\mathbb{R}^{+}$ and that ${\displaystyle \sum_{i=1}^{N}}\mu_{(x_{i},\overline{r}_{v_{x_{i}}})}(y)>0$
for each $y\in Q$ as $\mu_{(x,r)}(y)\geq0$ for every $y\in Q$ and
$\mu_{(x,r)}(y)>0$ for all $y\in B(x,r)$. Moreover, for all $y\in Q$
there is $B(x_{i},\overline{r}_{v_{x_{i}}})$ as above such that $y\in B(x_{i},\overline{r}_{v_{x_{i}}})$
since $\{B(x_{i},\overline{r}_{v_{x_{i}}})\}_{i=1,\ldots,N}$ is an
open covering of $Q$. Hence the function 
\[
Q\ni y\longmapsto\phi(y):=\frac{1}{{\displaystyle \sum_{i=1}^{N}}\mu_{(x_{i},\overline{r}_{v_{x_{i}}})}(y)}\sum_{j=1}^{N}\mu_{(x_{j},\overline{r}_{v_{x_{j}}})}(y)\Pi_{H_{y}^{{\rm Gl}}}v_{x_{j}}
\]
is a semialgebraic and continuous solution of the system on $Q$.
(We also notice that $\phi(y)\in H_{y}^{{\rm Gl}}$ for all $y\in Q$.)

Next, we suppose that $Q$ is a semialgebraic subset of dimension
$\tilde{d}\leq n$ and that we can write a semialgebraic and continuous
section of $\mathcal{H}^{\mathrm{Gl}}$ on any compact semialgebraic
subset of $Q$ of dimension $d\leq\tilde{d}-1$ of $Q$. We want to
construct a semialgebraic and continunous section of $\mathcal{H}^{\mathrm{Gl}}$
on $Q$. We will call $U$ the subset of $Q$ where $\omega$ or $\tilde{\varPi}_{1}$
is not continuous. Since we proved that $\omega$ and $y\longmapsto\tilde{\varPi}_{1}(y)e_{j}$
are semialgebraic (for all $j$), $U$ is a semialgebraic set of dimension
$\leq\tilde{d}-1$. (A zero-dimensional semialgebraic subset of $\mathbb{R}^{n}$
is finite. A one-dimensional semialgebraic subset of $\mathbb{R}^{n}$
is a union of finitely many real-analytic arcs and finitely many points.
See Chapter 2 of \cite{BCR}.) 

Thus, by inductive hypothesis there is a semialgebraic and continuous
section $S$ of $\mathcal{H}^{\mathrm{Gl}}$ on $\overline{U}$. In
fact, $\overline{U}$ is a compact semialgebraic subset of $Q$ of
dimension $\tilde{d}-1$. Now, $S$ can be extended to a semialgebraic
and continuous function on $Q$ by Proposition 2.6.9 at p. 45 of \cite{BCR}
which is a semialgebraic version of Tietze-Uryshon Theorem and we
will call $S$ that extension again. Actually, $S$ is defined on
$Q$, but it is a section of $\mathcal{H}^{\mathrm{Gl}}$ only on
$\overline{U}$. Hence, we compute the projection of $S$ on the fibers
of $\mathcal{H}^{\mathrm{Gl}}$ i.e. 

\begin{align*}
Q\ni y\longmapsto\sigma(y):= & \varPi_{H_{y}^{\mathrm{Gl}}}S(y)\\
= & \omega(y)+S(y)-\tilde{\varPi}_{1}(y)S(y).
\end{align*}

\noindent We notice that $\sigma$ is semialgebraic and also that
$\sigma$ is continuous on $Q\setminus\overline{U}$ since $\omega$
and $y\longmapsto\tilde{\varPi}_{1}(y)S(y)$ is continuous on $Q\setminus\overline{U}$.
Moreover, $\sigma$ is continuous on $\overline{U}$ since $S(x)\in H_{x}^{\mathrm{Gl}}$
for all $x\in\overline{U}$ and, hence, we can proceed as done to
prove that $x$ is not a discontinuity point for $\tilde{\gamma}_{v_{x}}$
in the case $d=1$. In fact, for all $x\in\overline{U}$ and all $y\in Q$
\begin{align*}
\left\Vert \Pi_{H_{y}^{{\rm Gl}}}S(y)-\underbrace{\Pi_{H_{x}^{{\rm Gl}}}S(x)}_{=S(x)\in H_{x}^{\mathrm{Gl}}}\right\Vert \leq & \left\Vert \Pi_{H_{y}^{{\rm Gl}}}S(y)-S(y)\right\Vert +\left\Vert S(y)-S(x)\right\Vert \\
\leq & \left\Vert \Pi_{H_{y}^{{\rm Gl}}}S(x)-S(y)\right\Vert +\left\Vert S(y)-S(x)\right\Vert \\
\leq & \left\Vert \Pi_{H_{y}^{{\rm Gl}}}S(x)-S(x)\right\Vert +2\left\Vert S(y)-S(x)\right\Vert ,
\end{align*}
where the second inequality follows from the minimal distance property
of the projection (we notice that $\Pi_{H_{y}^{{\rm Gl}}}S(x)\in H_{y}^{{\rm Gl}}$).
Now, $\left\Vert S(y)-S(x)\right\Vert \underset{y\rightarrow x}{\longrightarrow}0$
by the continuity of $S$ and $\left\Vert \Pi_{H_{y}^{{\rm Gl}}}S(x)-S(x)\right\Vert ={\rm dist}(S(x),H_{y}^{{\rm Gl}})\underset{y\rightarrow x}{\longrightarrow}0$
since $S(x)\in H_{x}^{\mathrm{Gl}}$.

The proof is complete.
\end{proof}
\begin{remark*}
Since the absence of empty fiber of $\mathcal{H}^{\mathrm{Gl}}$ is
equivalent to the existence of a continuous section of $\mathcal{H}^{\mathrm{Gl}}$
(see \cite{FK}) and, hence, of $\mathcal{H}$, we have just proved
that system (\ref{eq:TargetProblemSystem-1}) on a semialgebraic compact
space $Q\subseteq\mathbb{R}^{n}$ has a semialgebraic and continuous
solution if and only if it has a continuous one.
\end{remark*}
\section*{Acknowledgements}
I wish to thank Alberto Parmeggiani for useful discussions.



\end{document}